\def \ZZ{\mathbb Z}
\def \EE{\mathbb E}
\theoremstyle{plain}
\newtheorem{theorem}{Theorem}[section]
\newtheorem{lemma}[theorem]{Lemma}
\newtheorem{problem}[theorem]{Problem}
\theoremstyle{definition}
\begin{document}

\title[Furstenberg's filtering problem]{A note on Furstenberg's filtering problem}
\author[R.~Garbit]{Rodolphe Garbit}
\date{\today}
\address{Laboratoire de Math\'ematiques Jean Leray, UMR CNRS 6629\\
Universit\'e de Nantes\\ BP 92208\\ 44322 Nantes Cedex 3\\ France.}
\email{rodolphe.garbit@univ-nantes.fr}

\begin{abstract}
This note gives a positive answer to an old question in elementary probability theory that arose in  Furstenberg's seminal article ``Disjointness in Ergodic Theory.''
As a consequence, Furstenberg's filtering theorem holds without any integrability assumption.
\end{abstract}

\maketitle

\section{Disjointness and Filtering}
In his seminal article~\cite{Fur67}, H.~Furstenberg introduced the notion of {\em disjointness} of two stationary random processes.
A {\em joining} of two stationary processes $\{X_n\}$ and $\{Y_n\}$ is a two-dimensional stationary process $\{(X'_n,Y'_n)\}$ whose marginal distributions are 
those of $\{X_n\}$ and $\{Y_n\}$, respectively. The processes $\{X_n\}$ and $\{Y_n\}$ are said to be {\em disjoint} if any joining $\{(X'_n,Y'_n)\}$ is such that $\{X'_n\}$ and $\{Y'_n\}$ are independent.
Among other purposes this notion was used to study the following filtering problem:
if $\{X_n\}$ and $\{Y_n\}$ are two independent stationary sequences of real-valued random variables, $\{X_n\}$ being the emitted signal and $\{Y_n\}$ the noise, is it possible to recover $\{X_n\}$ from the received signal $\{X_n+Y_n\}$? More precisely, the problem was to determine whether $\{X_n\}$ is a function of the sum process $\{X_n+Y_n\}$. In that case, the sequence $\{(X_n,Y_n)\}$ is said to admit a {\em perfect filter}.
Furstenberg proved the following theorem:

\begin{theorem}[\cite{Fur67}, Theorem I.5]\label{mainthm} Let $\{X_n\}$ and $\{Y_n\}$ be two stationary sequences of integrable random variables and suppose that the two sequences are disjoint. Then $\{(X_n,Y_n)\}$ admits a perfect filter.
\end{theorem}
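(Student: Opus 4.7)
My plan is to construct a specific joining of $\{X_n\}$ and $\{Y_n\}$ by disintegrating the product law $\mu\otimes\nu$ over the sum process, then to use disjointness to force the conditional law of $X_0$ given $\sigma(\{Z_n\})$ to be a Dirac mass.

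Let $\mu$ and $\nu$ denote the laws of $\{X_n\}$ and $\{Y_n\}$ on $\RR^{\ZZ}$, so that, by independence, $(\{X_n\},\{Y_n\})$ has law $\mu\otimes\nu$. Let $s:\RR^{\ZZ}\times\RR^{\ZZ}\to\RR^{\ZZ}$ be coordinatewise addition and let $\rho=s_{\ast}(\mu\otimes\nu)$ be the law of $\{Z_n\}$. Since $\RR^{\ZZ}$ is Polish, I disintegrate to obtain a shift-equivariant probability kernel $z\mapsto K_z$, each $K_z$ supported on the fiber $s^{-1}(z)$, with $\mu\otimes\nu=\int K_z\,d\rho(z)$. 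On an enlarged probability space I sample $Z\sim\rho$ and, conditionally on $Z$, draw two independent pairs $(X^{(i)},Y^{(i)})$, $i=1,2$, each with law $K_Z$; by construction the full system is stationary, $Y^{(i)}=Z-X^{(i)}$, and $(\{X^{(1)}_n\},\{Y^{(2)}_n\})$ has marginals $\mu$ and $\nu$, so it is a joining of $\{X_n\}$ and $\{Y_n\}$.

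By disjointness the processes $\{X^{(1)}_n\}$ and $\{Y^{(2)}_n\}$ are independent. Writing $\bar X_0:=E[X_0\mid\sigma(\{Z_n\})]$, I unwind the vanishing covariance $E[X^{(1)}_0 Y^{(2)}_0]-E[X^{(1)}_0]E[Y^{(2)}_0]=0$ as follows: conditional i.i.d.\ of $X^{(1)}_0$ and $X^{(2)}_0$ given $Z$ yields $E[X^{(1)}_0 X^{(2)}_0]=E[\bar X_0^{\,2}]$; the $\sigma(Z)$-measurability of $Z_0$ yields $E[X^{(1)}_0 Z_0]=E[Z_0 X_0]$; and the original independence of $X_0$ and $Y_0$ rewrites $E[Z_0 X_0]=E[X_0^{\,2}]+E[X_0]E[Y_0]$. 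Collecting the three, the identity collapses to $E[X_0^{\,2}]=E[\bar X_0^{\,2}]$, and the equality case of conditional Jensen forces $X_0=\bar X_0$ almost surely. Stationarity and the shift-invariance of $\sigma(\{Z_n\})$ propagate this to every index, producing the perfect filter.

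The main obstacle is the closing variance identity: $E[X_0^{\,2}]=E[\bar X_0^{\,2}]$ is a priori only meaningful when $X_0\in L^2$, while the hypothesis only provides $X_0\in L^1$. To stay within the $L^1$ hypothesis I would instead apply the independence of $X^{(1)}$ and $Y^{(2)}$ through bounded Borel test functions $\phi(X^{(1)}_0),\psi(Y^{(2)}_0)$, obtaining the weaker family of identities that $E[\phi(X_0)\mid\sigma(\{Z_n\})]$ and $E[\psi(Y_0)\mid\sigma(\{Z_n\})]$ are uncorrelated for all bounded Borel $\phi,\psi$. Deducing from this scalar uncorrelatedness that the conditional kernel $K_Z$ is almost surely a Dirac mass on its fiber is precisely the delicate elementary-probability step at the heart of Furstenberg's argument and of the present note.
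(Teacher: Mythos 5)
Your construction of the relatively independent self-joining over the sum process, and the use of disjointness to make $\{X^{(1)}_n\}$ and $\{Y^{(2)}_n\}$ independent, is exactly the skeleton of Furstenberg's proof of this theorem. But the argument is not closed, and you say so yourself. The variance identity $\EE[X_0^{2}]=\EE[\bar X_0^{2}]$ requires $X_0\in L^2$, whereas the theorem assumes only integrability; and the bounded-test-function substitute you propose yields only that $\EE[\phi(X_0)\mid\sigma(\{Z_n\})]$ and $\EE[\psi(Y_0)\mid\sigma(\{Z_n\})]$ are uncorrelated, from which the almost-sure Dirac property of the fiber measures does not follow by anything you write (nor by an obvious variant: you cannot test against $\phi(X^{(2)}_0)=\phi(Z_0-Y^{(2)}_0)$ using the independence of $X^{(1)}$ and $Y^{(2)}$, because $Z$ is not a function of $Y^{(2)}$ alone). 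So the proof has a genuine gap precisely at its last step.

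The missing ingredient is Furstenberg's Lemma I.3, quoted in this note as Lemma~\ref{A}. Set $U_i=X^{(i)}_0$ and $V_j=Y^{(j)}_0$. Disjointness applied to both cross pairs, together with the independence built into each copy of $(X,Y)$, gives that each $U_i$ is independent of each $V_j$, the two copies satisfy $U_1+V_1=Z_0=U_2+V_2$, and $\EE[U_1]=\EE[U_2]$ since $U_1,U_2$ are equidistributed and integrable. The trick that keeps everything in $L^1$ is the substitution
$$(U_1-U_2)^2=(U_1-U_2)(V_2-V_1)=U_1V_2-U_1V_1-U_2V_2+U_2V_1,$$
a sum of products of independent integrable variables; its expectation factors as $(\EE[U_1]-\EE[U_2])(\EE[V_2]-\EE[V_1])=0$, whence $X^{(1)}_0=X^{(2)}_0$ a.s., the conditional law of $X_0$ given $\sigma(\{Z_n\})$ is a.s.\ a point mass, and the filter is perfect. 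Replacing one factor of $U_1-U_2$ by $V_2-V_1$ is exactly what makes the ``square'' integrable under the $L^1$ hypothesis; it is the step your $L^2$ computation reorganizes away and your $L^1$ workaround does not recover. (The new lemma of this note then removes even the integrability by first truncating with $\phi_n$ and running the same computation.)
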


Furstenberg then raised the question whether the integrability requirement was essential for the conclusion of the theorem.
He noticed that the integrability stipulation may be removed if the disjointness assumption is replaced by a much stronger assumption of {\em double disjointness}. The idea of double disjointness was then exploited in~\cite{Fur95} to treat a more general filtering problem
where the received signal has the form $Z_n=f(X_n,Y_n)$. In~\cite{Bul05}, the authors deal with a $\ZZ^2$-variant of the filtering problem.
But, as far as we know, the original question about the integrability assumption was still open. It is shown here that it can be removed from the statement of Theorem~\ref{mainthm}.

\section{Probabilistic Background}

From a technical point of view, the necessity of the integrability assumption in Theorem~\ref{mainthm} is purely a probabilistic question.
In the course of the proof of the filtering theorem, Furstenberg uses the following lemma:
\begin{lemma}[\cite{Fur67}, Lemma I.3]\label{A} 
Let $U_1, U_2, V_1, V_2$ denote four integrable random variables with each of the $U_i$ independent of each $V_j$. Then $U_1+V_1=U_2+V_2$ together with $\EE[U_1]=\EE[U_2]$ implies $U_1=U_2$ and $V_1=V_2$.
\end{lemma}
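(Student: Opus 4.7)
The plan is to reduce everything to a single auxiliary random variable. Set $W := U_1 - U_2$; the hypothesis $U_1 + V_1 = U_2 + V_2$ rewrites as $W = V_2 - V_1$. Thus $W$ is simultaneously measurable with respect to $\sigma(U_1,U_2)$ and with respect to $\sigma(V_1,V_2)$.

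The independence assumption, read in its natural sense that the vector $(U_1,U_2)$ is jointly independent of $(V_1,V_2)$ --- which is stronger than the four pairwise independences literally stated, but is the correct hypothesis in Furstenberg's setting, where the $U_i$ are functions of the signal process and the $V_j$ of the independent noise process --- makes $\sigma(U_1,U_2)$ and $\sigma(V_1,V_2)$ independent. A random variable measurable with respect to two mutually independent $\sigma$-algebras is independent of itself, and therefore almost surely constant: for every Borel set $A$ one has $\PP(W\in A)=\PP(W\in A)^2\in\{0,1\}$. So $W = c$ a.s.\ for some $c\in\RR$.

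Integrability then pins down $c$: since $U_1,U_2$ are integrable so is $W$, and $\EE[W]=\EE[U_1]-\EE[U_2]=0$ by hypothesis, whence $c=0$. Returning to the definitions, $U_1=U_2$ and $V_1=V_2$ almost surely.

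The single genuine idea is the self-independence of $W$; once that is in place the argument is immediate. The only place the integrability assumption enters is the last step, where it is used to identify the constant value $c$ as $0$. Without integrability one still obtains $U_1-U_2$ equal to an (unknown) constant almost surely, and pushing beyond that is precisely the obstacle that motivates the rest of the paper; so while there is no serious obstacle to the proof of the lemma as stated, the reader should note that the step ``$c=0$'' is the fragile one and cannot be bypassed by cosmetic changes.
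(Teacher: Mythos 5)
Your argument is internally coherent and elegant, but it does not prove the lemma as stated, because you have quietly replaced the hypothesis. The lemma assumes only that each $U_i$ is independent of each $V_j$ --- four pairwise independences --- whereas your key step, namely that $\sigma(U_1,U_2)$ and $\sigma(V_1,V_2)$ are independent $\sigma$-algebras (so that $W=U_1-U_2=V_2-V_1$ is independent of itself and hence a.s.\ constant), requires joint independence of the vector $(U_1,U_2)$ from the vector $(V_1,V_2)$. Pairwise independence of the coordinates does not imply this, so under the stated hypothesis the assertion that $W$ is measurable with respect to two mutually independent $\sigma$-algebras is unjustified. You flag this yourself, but it is not a cosmetic reading choice: the pairwise form of the hypothesis is exactly what is carried into Problem~\ref{B} and into the paper's main lemma, so a proof that upgrades it establishes a different (weaker) result.

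The upgrade is also unnecessary. Furstenberg's own argument, which the paper reproduces in truncated form, runs entirely on the pairwise hypothesis: set $H=(U_1-U_2)(V_2-V_1)$. Since $U_1+V_1=U_2+V_2$ gives $V_2-V_1=U_1-U_2$, one has $H=(U_1-U_2)^2\geq 0$. On the other hand, each product $U_iV_j$ is integrable with $\EE[U_iV_j]=\EE[U_i]\,\EE[V_j]$ by pairwise independence of integrable variables (Fubini), so expanding $H$ term by term yields $\EE[H]=(\EE[U_1]-\EE[U_2])(\EE[V_2]-\EE[V_1])=0$. A nonnegative variable with zero mean vanishes, so $U_1=U_2$ and then $V_1=V_2$. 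Your observation that, under joint independence, dropping integrability still forces $U_1-U_2$ to be an a.s.\ constant is a nice way to localize the difficulty of the non-integrable problem; but the paper's resolution must, and does, operate with pairwise independence only, and your route does not adapt to that setting.
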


Furstenberg  noted that ``It would be of interest to know if the integrability stipulation may be omitted, replacing the equality of the expectations of $U_1$ and $U_2$ by equality of their distributions,'' because a positive answer would mean that the integrability assumption in Theorem~\ref{mainthm} can also be omitted.
This remark leads to the following problem:
\begin{problem}\label{B} Let $U_1,U_2,V_1,V_2$ be four real random variables such that:
\begin{enumerate}
\item $U_1$ and $U_2$ have the same distribution;
\item For all $i,j\in \{1,2\}$,  $U_i$ and $V_j$ are independent.
\end{enumerate}
Is it true that $U_1+V_1=U_2+V_2$ implies $U_1=U_2$?
\end{problem}

In an attempt to answer this question he proposed the more general problem:
\begin{problem}\label{C} Given random variables $Z_1,Z_2,W_1,W_2$ such that $Z_1$ has the same distribution as $Z_2$ and $W_1$ has the same distribution as $W_2$, is it true that 
$Z_1+W_1\leq Z_2+W_2$
implies that
$Z_1+W_1=Z_2+W_2$?
\end{problem}
Taking $Z_1=U_1V_1$, $Z_2=U_1V_2$, $W_1=U_2V_2$ and $W_2=U_2V_1$, shows that an affirmative answer to Problem~\ref{C} would imply an
affirmative answer to Problem~\ref{B}. It is easy to see that the answer to Problem~\ref{C} is positive when the variables are integrable since the expectation of the positive variable $Z_1+W_1-Z_2-W_2$ is equal to zero. However, B.~Weiss gave in~\cite{Weiss05} a negative answer to Problem~\ref{C} by exhibiting a simple counterexample with non-integrable stable random variables for which the inequality holds without equality.

We now show that the answer to Problem~\ref{B} is positive.

\begin{lemma} Let $U_1,U_2,V_1,V_2$ be four real random variables such that:
\begin{enumerate}
\item $U_1$ and $U_2$ have the same distribution;
\item For all $i,j\in \{1,2\}$,  $U_i$ and $V_j$ are independent.
\end{enumerate}
Then $U_1+V_1=U_2+V_2$ implies $U_1=U_2$.
\end{lemma}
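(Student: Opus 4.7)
My plan is to bypass characteristic functions and integrability entirely by working with bounded indicator test functions; this can be seen as the natural bounded analogue of the moment identity $\EE[(U_1-U_2)(V_1-V_2)] = 0$ used in the integrable case (Lemma~\ref{A}). Setting $\Delta := U_1 - U_2 = V_2 - V_1$, the goal is to show that $\Delta = 0$ almost surely.

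I would carry this out in three steps. First, record the free identity
\[
\EE\bigl[(\indic{U_1 \leq a} - \indic{U_2 \leq a})(\indic{V_1 \leq b} - \indic{V_2 \leq b})\bigr] = 0 \quad \text{for all } a, b \in \RR,
\]
which follows by expanding the product into four terms, applying $U_i \perp V_j$, and using $\PP(U_1 \leq a) = \PP(U_2 \leq a)$. Second, use the equation $U_1 + V_1 = U_2 + V_2$ to write $V_1 = V_2 - \Delta$ and carry out a sign analysis: the product above turns out to be nonpositive everywhere and equal to $-1$ exactly on
\[
E^+_{a,b} := \{U_2 \leq a < U_1,\ b < V_2 \leq b + \Delta\}
\]
when $\Delta > 0$, with a symmetric event $E^-_{a,b} := \{U_1 \leq a < U_2,\ b + \Delta < V_2 \leq b\}$ when $\Delta < 0$; vanishing expectation then forces $\PP(E^+_{a,b}) = \PP(E^-_{a,b}) = 0$ for every $(a, b) \in \RR^2$. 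Third, a rational-covering argument: for every outcome in $\{\Delta > 0\}$ the intervals $[U_2, U_1)$ and $[V_2 - \Delta, V_2)$ have positive length and hence contain rationals $a, b$ placing the outcome in $E^+_{a,b}$, so $\{\Delta > 0\} \subseteq \bigcup_{a,b \in \QQ} E^+_{a,b}$ has probability zero; symmetrically $\PP(\Delta < 0) = 0$, whence $U_1 = U_2$ almost surely.

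The main obstacle I anticipate is the sign analysis of the middle step—pinning down the support of the indicator product as exactly $E^+_{a,b} \cup E^-_{a,b}$ so that the vanishing expectation cleanly decomposes into two nonpositive contributions each of which must therefore be zero. Once that decomposition is in hand, both the initial identity and the countable-covering step are routine.
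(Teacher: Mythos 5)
Your proof is correct. It rests on the same key mechanism as the paper's: compose the variables with bounded monotone functions so that Furstenberg's product $\EE[(f(U_1)-f(U_2))(g(V_2)-g(V_1))]$ both factors to zero (by independence and equidistribution) and has a definite sign pointwise (because $U_1-U_2=V_2-V_1$ and $f,g$ are monotone). The paper instantiates this with the truncation $\phi_n$ clamping to $[-n,n]$, gets a nonnegative product $H_n$ with zero mean, concludes $H_n=0$ a.s., and lets $n\to\infty$ to obtain $(U_1-U_2)^2=0$. You instantiate it with the indicators $\indic{\{\cdot\le a\}}$, which flips the sign (these are non-increasing, so your product is nonpositive), and your sign analysis is right: the product equals $-1$ exactly on $E^+_{a,b}\cup E^-_{a,b}$ and $0$ elsewhere, so the vanishing expectation kills both events for each fixed $(a,b)$. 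Your concluding step is then a countable covering of $\{U_1\ne U_2\}$ by the events $E^\pm_{a,b}$ with $a,b\in\QQ$, using that the half-open intervals $[U_2,U_1)$ and $[V_1,V_2)$ have positive length on $\{\Delta>0\}$ and hence contain rationals; this is valid. What the two versions buy: the paper's truncation handles all levels at once and finishes with a one-line pointwise limit, while your version avoids any limiting argument on random variables at the cost of the explicit support computation and the rational-covering step; yours also isolates the statement that the joint distribution functions of $(U_1,V_1)$ and $(U_2,V_2)$-type pairs agree on rectangles, which is a slightly more structural way of seeing why the hypothesis forces $U_1=U_2$. Both are complete, integrability-free proofs.
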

\begin{proof}
Fix $n\geq 1$ and let $\phi_n$ be the continuous function defined by 
$$\phi_n(x)=
\begin{cases}
n &\mbox{ if }x>n\\
x &\mbox{ if }x\in[-n,n]\\
-n &\mbox{ if }x<-n
\end{cases}$$
Set $U_{i,n}=\phi_n(U_i)$ and $V_{j,n}=\phi_n(V_j)$ for $i,j=1,2$. These variables verify:
\begin{enumerate}
\item $U_{1,n}$ and $U_{2,n}$ have the same distribution;
\item For all $i,j\in \{1,2\}$,  $U_{i,n}$ and $V_{j,n}$ are independent.
\end{enumerate}
We apply Furstenberg's argument to these truncated random  
variables.
Let $H_n=(U_{1,n}-U_{2,n})(V_{2,n}-V_{1,n})$. By linearity, independence and equality of distributions, we get
\begin{align*}
\EE[H_n]&=\EE[U_{1,n}]\EE[V_{2,n}]-\EE[U_{1,n}]\EE[V_{1,n}]-\EE[U_{2,n}]\EE[V_{2,n}]+\EE[U_{2,n}]\EE[V_{1,n}]\\
&=(\EE[U_{1,n}]-\EE[U_{2,n}])(\EE[V_{2,n}]-\EE[V_{1,n}])\\
&=0\;.
\end{align*}
Furthermore, since $\phi_n$ is a non-decreasing function and  $U_1-U_2=V_2-V_1$ a.s, we see that 
$H_n\geq 0$ a.s: if $U_1-U_2=V_2-V_1\geq 0$ then $U_{1,n}-U_{2,n}\geq 0$ and $V_{2,n}-V_{1,n}\geq 0$, hence $H_n\geq 0$; the same argument holds if
$U_1-U_2=V_2-V_1\leq 0$.

Since $H_n\geq 0$ a.s. and $\EE[H_n]=0$, we have $H_n=0$ a.s.
Now, observe that $H_n\to (U_1-U_2)(V_2-V_1)$ as $n\to\infty$. Thus, 
$$(U_1-U_2)^2=(U_1-U_2)(V_2-V_1)=0\mbox{ a.s. }$$
and the lemma is proven.
\end{proof}

As already mentioned, a consequence of this lemma is that Furstenberg's filtering theorem holds without any integrability assumption. Thus, we can formulate:
\begin{theorem}Let $\{X_n\}$ and $\{Y_n\}$ be two stationary sequences of random variables and suppose that the two sequences are disjoint. Then $\{(X_n,Y_n)\}$ admits a perfect filter.
\end{theorem}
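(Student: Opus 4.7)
The plan is to rerun Furstenberg's original proof of Theorem~\ref{mainthm} almost verbatim, replacing the single invocation of Lemma~\ref{A} by the integrability-free lemma established above. Everything else in Furstenberg's argument is purely measure-theoretic---it manipulates regular conditional distributions and joinings, not expectations---so no integrability hypothesis is needed at any other point.

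First I would assume, for contradiction, that $\{(X_n,Y_n)\}$ does not admit a perfect filter. Then the regular conditional distribution of $\{X_n\}$ given the sum process $\{S_n\}=\{X_n+Y_n\}$ fails to be a.s.\ a Dirac mass. On an enlarged probability space, construct two copies $\{X'_n\}$ and $\{X''_n\}$ of $\{X_n\}$ that are conditionally independent given $\{S_n\}$, and set $Y'_n=S_n-X'_n$, $Y''_n=S_n-X''_n$. The resulting process $\{(X'_n,X''_n,Y'_n,Y''_n)\}$ is stationary (because the conditional distribution is shift-equivariant), it satisfies $X'_n+Y'_n=X''_n+Y''_n$ identically, each of $\{X'_n\}$, $\{X''_n\}$ has the law of $\{X_n\}$, and each of $\{Y'_n\}$, $\{Y''_n\}$ has the law of $\{Y_n\}$.

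Next I would extract the four independence relations required by the lemma. The pair $(\{X'_n\},\{Y''_n\})$ forms a joining of $\{X_n\}$ and $\{Y_n\}$, so by the disjointness hypothesis the two component processes are independent; in particular, at every fixed time $n$ the variables $X'_n$ and $Y''_n$ are independent. A symmetric argument gives the independence of $X''_n$ and $Y'_n$, while $X'_n\perp Y'_n$ and $X''_n\perp Y''_n$ follow from the original independence of $\{X_n\}$ and $\{Y_n\}$. Applying the integrability-free lemma with $U_1=X'_n$, $U_2=X''_n$, $V_1=Y'_n$, $V_2=Y''_n$ yields $X'_n=X''_n$ a.s. Since this holds at every time $n$, the two conditionally independent copies coincide almost surely, forcing the conditional distribution of $\{X_n\}$ given $\{S_n\}$ to be a.s.\ a point mass; hence $\{X_n\}$ is a measurable function of $\{S_n\}$, contradicting the assumption.

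The step I expect to require the most care---and that I view as the main obstacle---is verifying that $(\{X'_n\},\{Y''_n\})$ (and its mirror image) qualifies as a genuine stationary joining of $\{X_n\}$ and $\{Y_n\}$, so that the disjointness hypothesis actually delivers the crucial cross-independences $X'_n\perp Y''_n$ and $X''_n\perp Y'_n$. Once these are granted, the new lemma disposes of the integrability issue with no further work.
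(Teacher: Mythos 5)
Your proposal is correct and is essentially the paper's proof: the paper simply observes that Furstenberg's original argument for Theorem~\ref{mainthm} goes through verbatim once Lemma~\ref{A} is replaced by the integrability-free lemma, and what you have written is precisely that argument (the relatively independent self-joining over the sum process, with disjointness supplying the cross-independences $X'_n\perp Y''_n$ and $X''_n\perp Y'_n$ needed to invoke the new lemma). The step you flag as delicate is handled exactly as you suggest: the four-component process is stationary because the conditional distribution is shift-equivariant, so $(\{X'_n\},\{Y''_n\})$ is a genuine joining.
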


\end{document}